\documentclass{amsart}

\usepackage{epsfig,color}

\usepackage[latin1]{inputenc}

\usepackage[T1]{fontenc}

\usepackage{color}

\usepackage{pst-plot,pstricks} 
\usepackage[brazil,english]{babel}
\usepackage{ amsmath, amssymb}
\usepackage{bbm}

\usepackage{constants} 
\usepackage[colorlinks]{hyperref}

\hypersetup{ colorlinks=true, pdftitle={Central limit theorem for generalized Weierstrass functions}, pdfsubject={ dynamical systems, circle maps, expanding maps, ergodic theory, central limit theorem, weierstrass, nowhere differentiable, Takagi}, pdfauthor={Amanda de Lima and Daniel Smania},pdfkeywords={ dynamical systems, circle maps, expanding maps, ergodic theory, central limit theorem, bloch function, weierstrass, nowhere differentiable, Takagi}}

\newtheorem{theorem}{Theorem}[section]
\newtheorem{lemma}[theorem]{Lemma}
\newtheorem{corollary}[theorem]{Corollary}

\newtheorem{proposition}[theorem]{Proposition}

\newtheorem{remark}[theorem]{Remark}
\newtheorem{definition}[theorem]{Definition}

\newcommand{\Z}{\mathbb{Z}}
\newcommand{\R}{\mathbb{R}}
\newcommand{\N}{\mathbb{N}}

\newcommand{\s}{\mathbb{S}}

\begin{document}
\author{Amanda de Lima and Daniel Smania}
\address{
Departamento de Matem\'atica,
ICMC-USP, Caixa Postal 668,
CEP 13560-970, 
S\~ao Carlos-SP, Brazil.}
\email{smania@icmc.usp.br\\
amandal@icmc.usp.br}
\urladdr{http://conteudo.icmc.usp.br/pessoas/smania/}

\date{\today}
\title[CLT for generalized Weierstrass functions]{Central Limit Theorem for generalized Weierstrass functions}

\subjclass[2010]{(primary) 37C30, 37C40,  26A15,  26A27, 60F05 (secondary) 37D20, 37A05, 37E05, 26A16, 30H30 }
\keywords{dynamical systems, circle maps, expanding maps, ergodic theory, central limit theorem, Law of iterated logarithm, variance, Livsic, weierstrass function, nowhere differentiable, cohomological equation, Takagi function}
\thanks{We thank the referees for the very helpful  comments. A.L. was partially supported by FAPESP 2010/17419-6 and D.S. was partially supported by CNPq 305537/2012-1, 307617/2016-5 and FAPESP 2017/06463-3}
\maketitle
\begin{abstract}  
Let $f: \s^1 \to \s^1$ be a $C^{2+\epsilon}$ expanding map of the circle and let $v: \s^1 \to \R$ be a $C^{1+\epsilon}$ function. Consider the twisted cohomological equation 
$$
v(x) = \alpha(f(x)) - Df(x)\alpha(x),
$$
which has a unique bounded solution $\alpha$. We show that $\alpha$ is either $C^{1+\epsilon}$ or nowhere differentiable. Moreover  $\alpha$ is nowhere differentiable if and only if $\sigma=\sigma(\phi)\neq 0$,  where 
$$
\phi(x)=- \left(\frac{v'(x)+\alpha(x)D^2f(x)}{Df(x)}\right)
$$
and
$$
\sigma^2(\phi) = \lim_{n\to \infty}\int \left(\frac{\sum_{j=0}^n \phi \circ f^j}{\sqrt{n}} \right)^2 \ d\mu.
$$
Here $\mu$ is the unique absolutely continuous invariant probability of $f$.  We show that if $\alpha$ is nowhere differentiable then 
$$
\lim_{h\to 0} \mu \left\{x:\frac{\alpha(x+h)-\alpha(x)}{\sigma \ell h\sqrt{-\log |h|}} \le y\right\} = \frac{1}{\sqrt{2\pi}} \int_{-\infty}^{y} e^{-\frac{t^2}{2}} dt.
$$
for some constant $\ell > 0$. In particular $\alpha$ is not a Lipschitz continuous  function on any subset with positive Lebesgue measure.
\end{abstract}

\section{Introduction and statement of the results}
\label{intro}
In the beginning of the nineteenth century, it was a common belief that a continuous function is differentiable at most of its domain. However, in 1872, Karl Weierstrass gave a stunning example of a function which is everywhere continuous but nowhere differentiable
\begin{equation}\label{wei}
W(x) = \sum_{k=0}^{\infty}a^k\cos(b^k\pi x).
\end{equation}
Here $a$ is a real number with $0<a<1$, $b$ is an odd integer and $ab> 1+ 3\pi/2$. In 1916, G. H. Hardy \cite{hardy} proved that the function $W$ defined above is continuous and nowhere differentiable if $0<a<1$, $ab\ge 1$. The constant $b$ does not need to be an integer.

 There are many contributions  on this subject after the introduction of the  Weierstrass function.  In 1903, T. Takagi \cite{takagi} presented  another example of a nowhere differentiable continuous function,  now called Takagi function, given by
$$
        T(x)=\sum_{k=0}^{\infty} \frac{1}{2^k}\,\inf_{m\in \Z}|2^kx - m|.
$$
Observe that  Weierstrass (in the case $a=b \in \mathbb{N}$) and Takagi functions satisfy the cohomological equations
$$
v(x) =  W(ax)-\frac{1}{a}W(x) , \mbox{\;\; where \;\;}  v(x) =- \frac{\cos(\pi x)}{a}
$$
and
$$
v(x) = T(2x)-2 T(x) , \mbox{\;\; where \;\;}  v(x) = -2\inf_{m\in \Z}|x-m|,
$$
respectively. It turns out these cohomological equations  are particular cases of the {\it twisted cohomological equation}, that is an essential tool in the study of the smooth perturbations of one-dimensional dynamical systems and the linear response problem (see Baladi and Smania  \cite{bs1} \cite{bs2} and Lyubich \cite{mhc}, for instance).  We can ask if similar results to those by Weiertrass and Takagi holds for more general dynamical systems $f$ and observables $v$. There are some results in this direction by Heurteaux \cite{yanick}, but only when $f$ is a {\it linear}   dynamical system.

Let us denote by $\s^1$ the unit circle, $\s^1 = \{(x,y) \in \R^2:x^2+y^2=1\}$.
Let $f: \s^1 \to \s^1$ be an expanding map, that is,  there is $\lambda >1$ such that
\begin{equation}\label{const}
|Df(x)|> \lambda,
\end{equation}
for every $x \in \s^1$ and let us consider the {\it twisted cohomological equation}
\begin{equation}\label{tce}
v(x)= \alpha(f(x)) - Df(x)\alpha(x),
\end{equation}
where $v: \s^1 \to \R$ and $\alpha: \s^1 \to \R$  are  bounded functions. 

In Baladi and Smania \cite{bs1} it was proved that the unique bounded function $\alpha$ satisfying  (\ref{tce}) is given by the formula
\begin{equation}\label{def_alpha}
\alpha(y) = -\sum_{n=1}^{\infty}\frac{v(f^{n-1}(y))}{Df^n(y)}.
\end{equation} 

Due to the similarity of this expression with the Weierstrass function we will call $\alpha$ a {\it generalized Weierstrass function}. Heurteaux \cite{yanick} considered the case when $f$ is a linear function and $v$ is an almost periodic function defining
\begin{equation}\label{type}
F(x) = \sum_{n=0}^{\infty}b^{-n}v(b^nx),
\end{equation}
where $1<b<\infty$. This function was called a Weierstrass-type function.

Along this work we will always assume 
\begin{align*}
{\mbox{\bf (A)\;\;\; }}&f: \s^1 \to \s^1 \text{ is a $C^{2+\epsilon}$ expanding map},\\
&v: \s^1 \to \R \text{ is a $C^{1+\epsilon}$ function, and }\\
&\alpha: \s^1 \to \R \text{  is the  {\it generalized Weierstrass function} as  defined in (\ref{def_alpha})}.
\end{align*}

 Our first results  about generalized Weierstrass functions are 

\begin{proposition}[Zygmund regularity] \label{prop_zygmund}
The function  $\alpha$ is in Zygmund class $\Lambda_1$, that is, there is $C>0$ such that
$$
|\alpha(x+h) + \alpha(x-h) -2\alpha(x)|\le C|h|,
$$
for all $x \in \s^1$ and $|h| \le 1$.
\end{proposition}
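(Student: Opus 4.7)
The plan is to split the series $\alpha(y) = -\sum_{n \ge 1} g_n(y)$, with $g_n(y) := v(f^{n-1}(y))/Df^n(y)$ from (\ref{def_alpha}), at a cutoff $N = N(x,h)$ chosen so that the $N$-cylinder $J_N(x)$ containing $x$ has length comparable to $h$; equivalently $|Df^N(x)|\,h \asymp 1$. By the standard distortion lemma for $C^{2+\epsilon}$ expanding maps, $[x-h,x+h]$ is covered by a bounded number of adjacent $N$-cylinders, and for every $n \le N$ the iterate $f^{n-1}$ has nearly constant derivative $\asymp Df^{n-1}(x)$ on this neighborhood.

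For $n \le N$, I would use the identity
$$g_n(x+h)-2g_n(x)+g_n(x-h) = \int_0^h \bigl(g_n'(x+s)-g_n'(x-s)\bigr)\,ds$$
together with
$$g_n'(y) = \frac{v'(f^{n-1}(y))}{Df(f^{n-1}(y))} - v(f^{n-1}(y))\,\frac{D^2f^n(y)}{(Df^n(y))^2}.$$
The second factor is uniformly bounded via the telescoping identity $(\log Df^n)'=\sum_{j<n}(D^2f/Df)(f^j)\cdot Df^j$ combined with $|Df^j|\ge\lambda^j$. Since $v'/Df$ and $D^2f/Df$ are $C^\epsilon$, distortion on $J_N(x)$ yields a local bound $|g_n'(y_1)-g_n'(y_2)|\le C\bigl(Df^n(x)^\epsilon|y_1-y_2|^\epsilon+Df^{n-1}(x)|y_1-y_2|\bigr)$, and hence $|g_n(x+h)-2g_n(x)+g_n(x-h)|\le C\bigl(Df^n(x)^\epsilon h^{1+\epsilon}+Df^{n-1}(x)\,h^2\bigr)$. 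Both parts grow geometrically in $n$, so summing over $1\le n\le N$ is dominated by the last term, giving $C\bigl(Df^N(x)^\epsilon h^{1+\epsilon}+Df^N(x)\,h^2\bigr)\le C(h^{-\epsilon}h^{1+\epsilon}+h^{-1}h^2)=Ch$.

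For $n > N$, I would discard cancellation and bound $|g_n(x+h)-2g_n(x)+g_n(x-h)|\le \|v\|_\infty \bigl(|Df^n(x+h)|^{-1}+|Df^n(x-h)|^{-1}+2|Df^n(x)|^{-1}\bigr)$. By distortion applied to each of the finitely many $N$-cylinders meeting $[x-h,x+h]$, $|Df^N(x\pm h)|\asymp |Df^N(x)|\asymp h^{-1}$; since $|Df^n(y)|\ge\lambda^{n-N}|Df^N(y)|$ for $n>N$, the tail telescopes to $O(h)$. The main delicate point is that the low-frequency smoothness estimate and the high-frequency tail must \emph{both} close at the same cutoff; choosing $N$ via $|J_N(x)|\asymp h$ is precisely what reconciles them, with the boundary case (when $x$ lies near an endpoint of $J_N(x)$) absorbed by the finitely many adjacent cylinders. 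The assumption $\epsilon>0$ is essential: it provides the Hölder control of $v'/Df$ and $D^2f/Df$ needed in the low-frequency estimate, and since $\lambda^\epsilon>1$ the seminorms obtained by composing $C^\epsilon$ functions with $f^{n-1}$ combine geometrically to the required order.
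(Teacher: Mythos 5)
Your proof is correct, but it takes a genuinely different route from the paper's. The paper deduces Zygmund regularity as an immediate corollary of its Proposition \ref{prop_alpha}: writing $\alpha(x\pm h)-\alpha(x)=\pm h\sum_{i<N(x,h)}\phi(f^i(x))+O(h)$, the leading Birkhoff-sum terms are odd in $h$ and cancel in the second difference, leaving $O(h)$. That argument is economical because Proposition \ref{prop_alpha} is needed anyway for the Central Limit Theorem, but it imports the H\"older continuity of $\alpha$ from Baladi--Smania and relies on the Mean Value Theorem rearrangement in the proof of Proposition \ref{prop_alpha}. You instead run the classical Weierstrass-type argument directly on the defining series: split at the scale $N(x,h)$ where $|Df^{N}(x)|\,|h|\asymp 1$, control the low-frequency terms $g_n=v\circ f^{n-1}/Df^n$ through the second-difference identity $\Delta_h^2 g_n(x)=\int_0^h\bigl(g_n'(x+s)-g_n'(x-s)\bigr)\,ds$ together with the H\"older bound on $g_n'$ (whose two pieces $v'\circ f^{n-1}/(Df\circ f^{n-1})$ and $v\circ f^{n-1}\cdot D^2f^n/(Df^n)^2$ you correctly identify, the second being uniformly bounded via the telescoped logarithmic derivative), and bound the tail $n>N$ by sup norms and the geometric decay $|Df^n(y)|^{-1}\le\lambda^{-(n-N)}|Df^N(y)|^{-1}$. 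The geometric summations close exactly as you say, both partial sums being dominated by their $n=N$ terms and evaluating to $O(h)$. This version is self-contained (it uses neither Proposition \ref{prop_alpha} nor any external regularity of $\alpha$, since the $g_n$ involve only $v$ and $f$), and as a by-product it yields the $\beta$-H\"older continuity of $\alpha$ for all $\beta<1$ independently, which the paper instead obtains by citation. Two cosmetic points: since $f$ is globally $C^{2+\epsilon}$ on the circle, the bounded distortion estimate (Lemma \ref{lema_distorcao} combined with Lemma \ref{size2}) holds on all of $[x-h,x+h]$ without any discussion of adjacent cylinders; and absolute values should be placed around $Df^n$ throughout, as $f$ need not be orientation-preserving.
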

and
\begin{theorem}[Regularity Dichotomy] \label{teo_alpha_dif}
One of the following statements holds:
\begin{itemize}
\item[(i)] $\alpha$ is of class $C^{1+\epsilon}$;
\item[(ii)] $\alpha$ is nowhere differentiable.
\end{itemize}
\end{theorem}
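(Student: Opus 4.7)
The plan is to analyze the formal derivative of the twisted cohomological equation. Differentiating the relation $v(x) = \alpha(f(x)) - Df(x)\alpha(x)$ at any point where $\alpha$ is differentiable yields the pointwise identity
$$
\alpha'(x) = \alpha'(f(x)) + \phi(x), \qquad \phi(x) = -\frac{v'(x) + \alpha(x) D^2 f(x)}{Df(x)},
$$
and under hypothesis \textbf{(H)} the function $\phi$ is H\"older continuous on $\s^1$, using that $\alpha$ is continuous (indeed in $\Lambda_1$ by Proposition \ref{prop_zygmund}). Only one direction of the dichotomy is nontrivial: starting from the assumption that $\alpha$ is differentiable at a single point $x_0 \in \s^1$, one must conclude $\alpha \in C^{1+\epsilon}$.

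The first step is propagation. Rewriting the equation in the form $\alpha(y) = (\alpha(f(y)) - v(y))/Df(y)$, a direct calculation using a local inverse branch of $f$ shows that $\alpha$ is differentiable at every point of the dense backward orbit $D = \bigcup_{n \ge 0} f^{-n}(x_0)$, with the explicit formula
$$
\alpha'(y) = \alpha'(x_0) + \sum_{j=0}^{n-1} \phi(f^j(y)) \quad \text{for } y \in f^{-n}(x_0).
$$
The crucial second step is to prove that $y \mapsto \alpha'(y)$ is uniformly continuous on $D$, so that it extends to a continuous function $\beta : \s^1 \to \R$ satisfying the coboundary relation $\beta - \beta\circ f = -\phi$; a Livsic-type bootstrap then upgrades $\beta$ to a $C^\epsilon$ function on $\s^1$.

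Once $\beta \in C^\epsilon(\s^1)$ is in hand, I would define $\hat\alpha$ as a $C^{1+\epsilon}$ primitive of $\beta$ on $\s^1$, after normalizing $\beta$ by an additive constant so that $\int_{\s^1}\beta\, dm = 0$ (this leaves the coboundary relation intact). A short computation then shows that the residual $V := \hat\alpha\circ f - Df\cdot\hat\alpha - v$ and the difference $U := \hat\alpha - \alpha$ satisfy simultaneously $V = U\circ f - Df\cdot U$ and $V' = -U\cdot D^2 f$. The remaining additive constant of $\hat\alpha$ can then be fixed so that $V$ vanishes at a point; combined with the uniqueness of bounded solutions to the twisted cohomological equation (formula (\ref{def_alpha})), this forces $U \equiv 0$, that is $\hat\alpha = \alpha$, and hence $\alpha \in C^{1+\epsilon}$.

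The main obstacle will be the extension step. Pointwise differentiability at $x_0$ only asserts that $\alpha'(x_0)$ is a finite real number; the Birkhoff sums $\sum_{j=0}^{n-1}\phi(f^j(y))$ need not a priori stay bounded as $y$ ranges over $D$, and the Zygmund inequality of Proposition \ref{prop_zygmund} by itself gives only $O(|h|\log(1/|h|))$ control on the difference quotients of $\alpha$. I expect that the correct quantitative tool is to compare the expansion $\alpha(x_0+h) = \alpha(x_0) + \alpha'(x_0) h + o(h)$ to its pullback through $f^n$, using bounded distortion on Markov cylinders at the scale $|Df^n(y)|^{-1}$; this should upgrade pointwise differentiability at the single point $x_0$ into the uniform modulus of continuity for $\alpha'$ on $D$ needed to carry out the extension.
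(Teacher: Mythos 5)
There is a genuine gap, and it sits exactly where you flag it: the extension step. The propagation step is fine --- differentiability at $x_0$ does pass to every $y\in f^{-n}(x_0)$ with $\alpha'(y)=\alpha'(x_0)+\sum_{j=0}^{n-1}\phi(f^j(y))$ --- but the claim that $\alpha'$ is uniformly continuous (or even bounded) on $D=\bigcup_n f^{-n}(x_0)$ is not a technical refinement to be supplied later; it amounts to the uniform boundedness of the Birkhoff sums $\sum_{j=0}^{n-1}\phi(f^j(y))$ over all backward orbit segments ending at $x_0$, which (by Livsic/Gottschalk--Hedlund type arguments, or via Theorem \ref{teo_se_e_so_se}) is essentially the statement $\sigma(\phi)=0$, i.e.\ the conclusion you are trying to reach. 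Two nearby points $y\in f^{-n}(x_0)$ and $y'\in f^{-n'}(x_0)$ generally have very different depths $n,n'$, and nothing in the hypothesis ``$\alpha$ is differentiable at the single point $x_0$'' controls the difference of the two corresponding Birkhoff sums: the $o(h)$ in $\alpha(x_0+h)=\alpha(x_0)+\alpha'(x_0)h+o(h)$ carries no rate, and pulling it back through an inverse branch of $f^n$ only re-establishes differentiability at $y$ at the rescaled scale $h/|Df^n(y)|$; bounded distortion compares derivatives along one branch, not Birkhoff sums along two distinct backward orbits. So the proposed tool cannot deliver the modulus of continuity you need, and the plan is circular at its core.

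The paper avoids first derivatives altogether and argues by contrapositive with second-order differences: setting $\omega(\alpha,x,h)=|\Delta_h^2\alpha(x)|/|h|^{1+\epsilon}$, the cohomological equation gives the cocycle-type inequality $\omega(\alpha,x,h)\ge |Df(x)|^{\epsilon}\,\omega(\alpha,f(x),Df(x)h)-K$ (Lemma \ref{lema_tecnico_delta}), whose multiplicative gain $|Df|^{\epsilon}>1$ amplifies any failure of $C^{1+\epsilon}$-regularity under pullback. If $\alpha\notin C^{1+\epsilon}$ then $\omega$ is large at some $(x_0,h_0)$, and iterating the inequality along preimages plus bounded distortion produces, near every point $y$ and at every scale $h$, a point $x_n\in[y-h,y+h]$ and a comparable scale $h'$ with $|\Delta^2_{h'}\alpha(x_n)|\gtrsim h$; this is incompatible with differentiability at $y$, which forces $\Delta^2_{h'}\alpha(x_n)=o(h)$ for such points and scales. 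The advantage of the second difference is precisely that the troublesome first-order data (the value $\alpha'(x_0)$ and the Birkhoff sums of $\phi$) cancels out of it, which is what your first-difference scheme cannot arrange. To salvage your architecture you would need an independent proof that differentiability at one point forces $\sup_{n,\,y\in f^{-n}(x_0)}\big|\sum_{j=0}^{n-1}\phi(f^j(y))\big|<\infty$; I do not see how to obtain this without an argument of the paper's type.
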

Heurteaux  \cite{yanick}  proved a similar result for Weierstrass-type functions, when $f$ is a linear dynamical system. In the last section we give a very short proof of this result for non-linear dynamical systems. The above results  are not our main results, however  most of this work is dedicated to understand when the cases  in Theorem \ref{teo_alpha_dif} occurs, providing  an easy way to verify on which  case a given $v$ and $\alpha$ fit and also to give a more precise understanding of the regularity of $\alpha$ in the second case. Let
$$
\phi(x)=- \left(\frac{v'(x)+\alpha(x)D^2f(x)}{Df(x)}\right).
$$
Note that by Proposition \ref{prop_zygmund}, the function $\phi$ is $\epsilon$-H\"older. Since $f$ is an $C^2$ expanding map of the circle, $f$ admits a unique  invariant probability that is  absolutely continuous  with respect to the Lebesgue measure $\mu=\rho m$ (see for instance \cite{viana}), where its density $\rho$ is a H\"older function.  We show in Lemma \ref{lema_pressao} that
\begin{equation}\label{zeroint33}
\int \phi \ d\mu = 0.
\end{equation}
 Let us consider
$$
\sigma^2(\phi) = \lim_{n\to \infty}\int \left(\frac{\sum_{j=0}^n \phi \circ f^j}{\sqrt{n}} \right)^2 \ d\mu.
$$

We have the following

\begin{theorem}\label{teo_se_e_so_se}
We have  
$\sigma(\phi) =0$ if and only if $\alpha$ is of class $C^{1+\epsilon}.$
\end{theorem}

\begin{corollary}\label{cor_alpha}
The function  $\alpha$ is nowhere differentiable if and only if there is a periodic point $x$ of $f$ such that
$$
\sum_{i=0}^{p-1} \phi\circ f^i(x) \neq 0.
$$
Here  $p$ is the period of $x$.
\end{corollary}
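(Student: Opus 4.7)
The plan is to reduce the corollary to a classical Livsic-type statement by combining Theorem~\ref{teo_alpha_dif} with Theorem~\ref{teo_se_e_so_se}.

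First, by the Regularity Dichotomy (Theorem~\ref{teo_alpha_dif}), $\alpha$ is nowhere differentiable if and only if $\alpha$ is not of class $C^{1+\epsilon}$. By Theorem~\ref{teo_se_e_so_se}, the latter is equivalent to $\sigma(\phi) \neq 0$. So the task is to show
\[
\sigma(\phi) \neq 0 \;\Longleftrightarrow\; \exists\, x \text{ periodic of period } p \text{ with } \sum_{i=0}^{p-1}\phi\circ f^i(x) \neq 0.
\]

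Next I would invoke the standard variance characterization for H\"older observables over smooth expanding maps. As noted right after \eqref{zeroint33}, $\phi$ is $\epsilon$-H\"older and $\int \phi\,d\mu = 0$. For the $C^{2+\epsilon}$ expanding map $f$ with its unique absolutely continuous invariant probability $\mu$, the transfer operator has a spectral gap on suitable H\"older spaces, and the Green--Kubo series converges. The classical result (see e.g.\ Parry--Pollicott, or the exposition in Viana's notes) states that for an $\epsilon$-H\"older observable $\phi$ with $\int \phi\,d\mu = 0$, one has $\sigma^2(\phi) = 0$ if and only if $\phi$ is an $\epsilon$-H\"older coboundary, i.e.\ there exists an $\epsilon$-H\"older function $\psi$ with
\[
\phi = \psi \circ f - \psi.
\]

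Then I would apply Livsic's theorem: for a topologically mixing $C^{1+\epsilon}$ expanding map of the circle and an $\epsilon$-H\"older function $\phi$, $\phi$ is an $\epsilon$-H\"older coboundary if and only if $\sum_{i=0}^{p-1} \phi\circ f^i(x) = 0$ for every periodic point $x$ of period $p$. Combining the two equivalences,
\[
\sigma(\phi) = 0 \;\Longleftrightarrow\; \sum_{i=0}^{p-1}\phi\circ f^i(x) = 0 \text{ for every periodic point } x,
\]
and taking contrapositives yields the corollary.

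The main point requiring care is invoking Livsic and the variance-vanishing criterion in exactly the regularity class available here: $f$ is only assumed $C^{2+\epsilon}$ and $\phi$ only $\epsilon$-H\"older (because the Zygmund regularity of $\alpha$ in Proposition~\ref{prop_zygmund} does not give better than H\"older control on $\phi$). However, both the Livsic rigidity theorem and the Green--Kubo-based characterization of degenerate variance are known in precisely this H\"older setting for smooth expanding maps, so no new technical work is needed; the corollary is then immediate.
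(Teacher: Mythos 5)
Your proposal is correct and follows essentially the same route as the paper's own proof: the paper likewise combines Livsic's theorem with the Parry--Pollicott/Broise characterization of $\sigma^2(\phi)=0$ as the existence of an $\epsilon$-H\"older coboundary, and then concludes via Theorem~\ref{teo_se_e_so_se} together with the dichotomy of Theorem~\ref{teo_alpha_dif}. Your remark about checking that both ingredients hold in the H\"older regularity class actually available is the right point of care, and it is satisfied here.
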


\begin{remark}{\normalfont If $f$ is  the linear map $f(x)=bx$  then $\phi= -v'/b$. In this case Heurteaux \cite{yanick} proved that either 
\begin{itemize}
\item[i.] $\phi(0)\neq 0$, or
\item[ii.] $\phi(0)= 0$, $v$ is not constant and $0$ is a global  extremum of $v$,
\end{itemize}
are  sufficient conditions for  $\alpha$ to be nowhere differentiable.  This criterion was generalized to Weierstrass functions in higher dimensions by Donaire, Llorente and Nicolau \cite{dln}. The sufficiency of (i)  follows from Corollary \ref{cor_alpha} in the setting of $1$-periodic $C^{1+\epsilon}$ functions $v$.}
\end{remark}

Denote 
$$
L=\int \log|Df| \ d\mu>0, \ \ell = \frac{1}{\sqrt{L}}.
$$

One can ask about the regularity of $\alpha$ when it is nowhere differentiable. We show a Central Limit Theorem for the modulus of continuity of the function $\alpha$.
\begin{theorem}[Central Limit Theorem for the modulus of continuity] \label{teo_modulo_alpha}
If $\sigma(\phi)\neq 0$ (that is, $\alpha$ is nowhere differentiable) we have 
\begin{equation}\label{ccl}
\lim_{h\to 0} \mu \left\{x \in \mathbb{S}^1 :\frac{\alpha(x+h)-\alpha(x)}{\sigma(\phi) \ell h\sqrt{-\log |h|}} \le y\right\} = \frac{1}{\sqrt{2\pi}} \int_{-\infty}^{y} e^{-\frac{t^2}{2}} dt.
\end{equation}
\end{theorem}

\begin{corollary}\label{cor_lipschitz}
If $\alpha$ is not $C^{1+\epsilon}$ then $\alpha$  is nowhere differentiable and it is not a lipschitzian function on any measurable subset $A\subset \mathbb{S}^1$ with $\mu(A)>0$. 
\end{corollary}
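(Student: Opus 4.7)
The first assertion is immediate from the Regularity Dichotomy (Theorem \ref{teo_alpha_dif}), so the content of the corollary lies in the second statement. The plan is to argue by contradiction using the Central Limit Theorem (Theorem \ref{teo_modulo_alpha}): on any set where $\alpha$ is Lipschitz with a fixed constant, the difference quotient $(\alpha(x+h)-\alpha(x))/h$ is bounded, and then the extra factor $\sqrt{-\log|h|}$ in the denominator of the normalization appearing in Theorem \ref{teo_modulo_alpha} forces the normalized increment to concentrate at $0$. This is incompatible with the non-degenerate Gaussian limit law, which will be the source of the contradiction.

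Concretely, I would suppose there exist a measurable $A\subset \s^1$ with $\mu(A)>0$ and a constant $M>0$ such that $|\alpha(x)-\alpha(y)|\le M|x-y|$ for all $x,y\in A$, and introduce
$$
B_h \ := \ A\cap (A-h) \ = \ \{x\in A:\, x+h\in A\}.
$$
Since $\mu=\rho m$ with $\rho$ continuous and strictly positive on $\s^1$, and since $m(A\triangle (A-h))\to 0$ as $h\to 0$ by the $L^1$-continuity of translations applied to $\fc_A$, it follows that $\mu(B_h)\to \mu(A)>0$. For $x\in B_h$ the Lipschitz condition gives
$$
\left|\frac{\alpha(x+h)-\alpha(x)}{\sigma(\phi)\,\ell\,h\,\sqrt{-\log|h|}}\right| \ \le\ \frac{M}{\sigma(\phi)\,\ell\,\sqrt{-\log|h|}}\;\xrightarrow[h\to 0]{}\;0,
$$
so for any fixed $\epsilon>0$ and $|h|$ sufficiently small, $B_h$ is contained in the set where the normalized increment has absolute value at most $\epsilon$.

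Combining these two observations one gets
$$
\mu\left\{x\in \s^1 :\left|\frac{\alpha(x+h)-\alpha(x)}{\sigma(\phi)\,\ell\,h\,\sqrt{-\log|h|}}\right|\le \epsilon\right\}\ \ge\ \mu(B_h),
$$
and taking the limit as $h\to 0$, Theorem \ref{teo_modulo_alpha} identifies the left hand side with $\Phi(\epsilon)-\Phi(-\epsilon)$, where $\Phi$ denotes the standard normal distribution function, while the right hand side tends to $\mu(A)$. This yields $\Phi(\epsilon)-\Phi(-\epsilon)\ge \mu(A)$ for every $\epsilon>0$, and letting $\epsilon\to 0^+$ produces the contradiction $0\ge \mu(A)>0$.

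The only step requiring any real care is the convergence $\mu(B_h)\to \mu(A)$, which is precisely where the positivity of $\mu(A)$ (as opposed to mere nonemptiness of $A$) is exploited; it follows at once from absolute continuity of $\mu$ and the standard $L^1$-continuity of translations. Apart from this bookkeeping, the argument is a direct quantitative confrontation between the Lipschitz bound and the non-degenerate Gaussian spread predicted by the CLT, with the logarithmic factor in the denominator of Theorem \ref{teo_modulo_alpha} being exactly the feature that rules out Lipschitz behaviour on any set of positive measure.
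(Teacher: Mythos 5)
Your argument is correct and is essentially the proof the paper has in mind: the paper's own ``proof'' merely states that it is identical to the proof of a similar statement in \cite{lsx}, and that argument is exactly the one you give (the Lipschitz bound on $A\cap(A-h)$ together with $L^1$-continuity of translations, confronted with the non-degenerate Gaussian limit of Theorem \ref{teo_modulo_alpha}). The only point worth making explicit is that applying Theorem \ref{teo_modulo_alpha} requires $\sigma(\phi)\neq 0$, which follows from Theorem \ref{teo_se_e_so_se} because $\alpha$ is not $C^{1+\epsilon}$.
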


We also have

\begin{theorem}[Law of iterated logarithm for the modulus of continuity] \label{teo_lil}
If  $\sigma(\phi)\neq 0$ (that is, $\alpha$ is nowhere differentiable) we have 
\begin{equation}\label{lii}
\limsup_{h\rightarrow 0} \frac{\alpha(x+h)-\alpha(x)}{ h\sqrt{-2\log |h| \log \log (-\log |h|) }} = \sigma(\phi)\ell .
\end{equation}
and
$$ 
\liminf_{h\rightarrow 0} \frac{\alpha(x+h)-\alpha(x)}{ h\sqrt{-2\log |h| \log \log (-\log |h|) }} = -\sigma(\phi)\ell .
$$
for $m$-almost every point $x$.
\end{theorem}

Gamkrelidze (see \cite{gamk1} and \cite{gamk2}) proved that the  Weierstrass and Takagi functions satisfy the Central Limit Theorem and the law of iterated logarithm  for the modulus of continuity. Theorems \ref{teo_modulo_alpha}  and  \ref{teo_lil} are   far more general results.

Dynamical systems with a fair amount of hyperbolicity  often have  remarkable probabilistic properties. Indeed in many cases a sufficiently smooth observable satisfies the Central Limit Theorem (CLT) and  the Law of Iterated Logarithm.  We also have Poisson Limit Theorems in this setting (see Coelho and Collet \cite{cc} and Denker, Gordin, and Sharova \cite{dgs}). For instance, the CLT holds for   $C^2$ expanding maps on circle with sufficiently regular observables, and indeed  the same holds for  piecewise expanding maps on the interval (see Keller \cite{kellerbv} and Hofbauer and Keller \cite{hofbauer}).  Theorems \ref{teo_modulo_alpha} and \ref{teo_lil} fits well with these probabilistic results.

On the other hand  note that if it is true that a fairly wide class of dynamical systems have observables that satisfies CLT (see Burton and Denker\cite{bd}), if the dynamical system does not have a hyperbolic behaviour (as  irrational rotations on the circle)  and/or we consider  low regularity observables, the Central Limit Theorem often fails in  quite a  striking way for "typical" continuous observables (see Liardet and Voln\'y \cite{lv}). 

Indeed for rotations on the circle  $f$ with diophantine rotation number and $C^k$-functions $v$ with zero average and  $k$ large enough, the solution $\alpha$  of the cohomological equation (\ref{tce}) exists and it is {\it always} smooth (see for instance Herman \cite{he1}\cite{he2} and the references therein).

\begin{remark}{\normalfont    Corollary \ref{cor_alpha}  implies  that for  a fixed $C^{2+\epsilon}$ expanding map and a generic ({\it in the topological sense}) $C^{1+\epsilon}$ function $v$  we have that (\ref{ccl}) and (\ref{lii}) hold. This  resembles the   L\'evy's modulus of continuity theorem \cite{levy} that gives a CLT and law of iterated logarithm for the  modulus of the continuity  of a typical (in the measure-theoretical sense)  path in Wiener process.}
\end{remark}

\begin{remark} {\normalfont One should compare Theorem \ref{teo_lil} with Theorem 1 in  Anderson and Pitt  \cite[Theorem 4.1]{ap}  that claims  that 
$$g(x)=\limsup_{h\rightarrow 0} \frac{|\alpha(x+h)-\alpha(x)|}{ |h|\sqrt{-\log |h| \log \log (-\log |h|) }} $$
is an essentially bounded function   for every function $\alpha$ in the Zygmund class.}\end{remark}

\begin{remark} {\normalfont A Bloch function  on the unit disk $\mathbb{D}$ is a holomorphic function $\beta\colon \mathbb{D}\rightarrow \mathbb{C}$ such that
$$\sup_{z \in \mathbb{D}}  |\beta'(z)|(1-|z|^2) < \infty.$$
Primitives of   Bloch functions extend continuously to $\mathbb{S}^1$  as functions of  Zygmund's class (see  Anderson, Clunie and Pommerenke  \cite{anderson} and Anderson and Pitt \cite{ap2} ). There is a long line of studies of  probabilistic-like properties of   those functions, and  Makarov's law of iterated logarithm \cite{makarov} is one of the most famous results on this topic. Indeed  many  authors  discovered various quantities associated to a Bloch function that resembles the role of variance in the corresponding probabilistic results.  It turns out that for  certain  "dynamically defined" Bloch functions (see Ivrii \cite{oleg}) all those variances  coincide and the variance has a dynamical interpretation (Przytycki,  Urba{\'n}ski and  Zdunik \cite{puz1}\cite{puz2}. See also   McMullen \cite{mac}and   Ivrii \cite{oleg}). That is the case of Bloch functions with  a  primitive whose extension to $\mathbb{S}^1$  is a function $\alpha$ as in this work, when the variance is   $\sigma(\phi)\ell$. So Theorem \ref{teo_lil} seems to offer yet another way to define variance of a Bloch function on the unit disk, but this time just in terms of its primitive. If $\alpha$  is the primitive of a Bloch function $\beta$ in the unit circle, we define its boundary  variance $\sigma_{b}(\beta)$ as 

$$\sigma_{b}(\beta) = || \limsup_{h\rightarrow 0} \frac{|\alpha(x+h)-\alpha(x)|}{ |h|\sqrt{-\log |h| \log \log (-\log |h|) }}||_{L^\infty(\mathbb{S}^1)}.$$
}
 \end{remark}


\section{Relating the Newton quotient of $\alpha$ with the Birkhoff sum of $\phi$}

In order to prove Theorem \ref{teo_modulo_alpha}, in this section we  will relate  the study of the  Newton quotients of $\alpha$
$$\frac{\alpha(x+h)-\alpha(x)}{h}$$
to the study of the Birkhoff sums
$$\sum_{i=0}^{N}\phi \circ f^{i}(x).$$

\begin{remark}\label{sim} {\normalfont  Suppose that the topological degree of $f$ is $d$. To simplify the notation, we will replace $f$, $v$  and $\alpha$ by its lifts with respect to the covering $\pi(t)=(\cos (2\pi t),\sin (2\pi t))$. That is, we will see $f$ as an expanding function $f\colon \mathbb{R} \mapsto \mathbb{R}$ satisfying $f(x+1)=f(x)+d$ and $v$ and $\alpha$ as  $1$-periodic functions $v\colon \mathbb{R} \rightarrow \mathbb{R}$ and $\alpha\colon \mathbb{R} \rightarrow \mathbb{R}$. }\end{remark}

\begin{definition}\label{deff}
Given  $h$ such that $0 < |h| < 1$ and $x \in \s^1$, let $N(x,h)$ be the unique integer such that
\begin{equation}\label{defNxh}
\frac{1}{|Df^{N(x,h)+1}(x)|}\le |h| <  \frac{1}{|Df^{N(x,h)}(x)|}.
\end{equation}
\end{definition}

The main result of this section is 
\begin{proposition}\label{prop_alpha}
Let $N(x,h)$ be as defined in (\ref{defNxh}). Then
$$
\alpha(x+h)-\alpha(x) = h\sum_{i=0}^{N(x,h)-1} \phi(f^{i}(x)) + O(h),
$$
where
$$
\phi(x)=-\left(\frac{v'(x)}{Df(x)} + \frac{\alpha(x)D^2f(x)}{Df(x)}\right).
$$
\end{proposition}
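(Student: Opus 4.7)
My plan is to derive a one-step recursion for the Newton quotient of $\alpha$ and iterate it $N(x,h)$ times. First, writing the twisted cohomological equation in the form $\alpha(y) = -v(y)/Df(y) + \alpha(f(y))/Df(y)$, I would compute $\alpha(x+h)-\alpha(x)$ by treating the two pieces separately. The $v/Df$ piece is handled by Taylor expansion (valid since $v/Df \in C^{1+\epsilon}$), producing an error $O(h^{1+\epsilon})$. For the $\alpha(f(\cdot))/Df(\cdot)$ piece I would put both terms over the common denominator $Df(x+h)Df(x)$, add and subtract $\alpha(f(x))Df(x)$ in the numerator, expand $Df(x+h)-Df(x) = D^2f(x)h + O(h^{1+\epsilon})$, and then substitute $\alpha(f(x)) = v(x) + Df(x)\alpha(x)$ from (\ref{tce}). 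The terms containing the pairings $v \cdot D^2f$ will cancel between the two pieces, and the remaining terms $-v'(x)h/Df(x) - \alpha(x)D^2f(x)h/Df(x)$ combine into $\phi(x)h$, leaving
\[
\alpha(x+h)-\alpha(x) = \phi(x)\,h + \frac{\alpha(f(x+h))-\alpha(f(x))}{Df(x+h)} + O(h^{1+\epsilon}).
\]

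I would then set $x_i = f^i(x)$, $h_i = f^i(x+h)-f^i(x)$, $q_i = [\alpha(x_i+h_i)-\alpha(x_i)]/h_i$, and apply the one-step identity to each pair $(x_i,h_i)$. Using $h_{i+1} = Df(x_i)h_i + O(h_i^2)$ and $Df(x_i+h_i) = Df(x_i)+O(h_i)$, one obtains the ratio $h_{i+1}/[Df(x_i+h_i)\,h_i] = 1+O(h_i)$, so dividing by $h_i$ gives the recursion
\[
q_i = \phi(x_i) + q_{i+1}\bigl(1 + O(h_i)\bigr) + O(h_i^\epsilon).
\]
Unrolling from $i=0$ up to $i=N:=N(x,h)$ produces
\[
q_0 = \sum_{i=0}^{N-1}\phi(x_i)\prod_{j=0}^{i-1}(1+O(h_j)) + q_N\prod_{j=0}^{N-1}(1+O(h_j)) + \sum_{i=0}^{N-1}O(h_i^\epsilon)\prod_{j=0}^{i-1}(1+O(h_j)).
\]

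The main obstacle is to show that $q_0 = \sum_{i<N}\phi(x_i) + O(1)$ uniformly in $x$ and $h$. By the definition of $N(x,h)$, the sequence $h_i \asymp |Df^i(x)|\cdot h$ grows geometrically with rate between $\lambda$ and $\|Df\|_\infty$, and $h_N \asymp 1$. Consequently $\sum_i h_i$, $\sum_i h_i^\epsilon$, and $\sum_i (N-i)h_i$ are all bounded by geometric-series estimates of the form $\sum_{k\ge 1}k\lambda^{-k}<\infty$. This yields the boundedness of $\prod_{j<i}(1+O(h_j))$ and of the tail error sum; the boundedness of $q_N$ (since $\alpha$ is bounded and $h_N \asymp 1$); and, after writing $\prod_{j<i}(1+O(h_j)) = 1 + E_i$ with $|E_i| = O(\sum_{j<i}h_j)$, the estimate $\sum_i |\phi(x_i)||E_i| \le C\sum_j (N-j)h_j = O(1)$, using that $\phi$ is bounded (it is $\epsilon$-H\"older by Proposition \ref{prop_zygmund}). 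Combining these estimates gives $q_0 = \sum_{i<N}\phi(x_i) + O(1)$, and multiplying by $h$ yields the desired identity.
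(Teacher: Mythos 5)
Your proof is correct, but it takes a genuinely different route from the paper's. The paper truncates the series $\alpha(y)=-\sum_{n\ge1}v(f^{n-1}(y))/Df^n(y)$ at level $N(x,h)$ (the tail being $O(h)$ by bounded distortion), applies the Mean Value Theorem to the resulting finite sum of smooth functions to produce an intermediate point $\theta\in[x,x+h]$, reorganizes the double sum coming from $D^2f^n$ so as to recognize $\alpha(f^j(\theta))$ up to a summable error (this is the computation in (\ref{pw2})--(\ref{pw3})), and finally replaces $\theta$ by $x$ using the fact, imported from Baladi--Smania \cite{bs3}, that $\alpha$ and hence $\phi$ is H\"older. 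You instead derive the exact one-step identity $\alpha(x+h)-\alpha(x)=\phi(x)h+[\alpha(f(x+h))-\alpha(f(x))]/Df(x+h)+O(h^{1+\epsilon})$ directly from the cohomological equation and telescope it along the orbit; the cancellation of the $v\,D^2f$ pairings that you observe is the pointwise counterpart of the paper's double-sum rearrangement, and your control of the products $\prod_j(1+O(h_j))$ and of the tail term $q_N$ via $h_i\lesssim\lambda^{-(N-i)}$ plays the role of the paper's Lemmas \ref{size2} and \ref{bb666}. Your route buys two things: the Birkhoff sum lands on $f^i(x)$ from the start, so no intermediate point has to be removed at the end, and only the boundedness of $\alpha$ (immediate from (\ref{def_alpha})) is needed rather than its H\"older continuity. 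One blemish: you justify the boundedness of $\phi$ by citing Proposition \ref{prop_zygmund}, but the paper proves that proposition \emph{from} Proposition \ref{prop_alpha}, so the citation is circular as written; fortunately all you actually use is that $\phi$ is bounded, which follows at once from the boundedness of $\alpha$, $v'$, $D^2f$ and the lower bound $|Df|>\lambda>1$, so this is a matter of attribution rather than a gap in the argument.
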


Before proving Proposition \ref{prop_alpha} we will need some lemmas. The following is a quite familiar bounded distortion estimate.

\begin{lemma}[Bounded Distortion]\label{lema_distorcao}
Denote also by  $f$ be the lift to $\mathbb{R}$ of the function $f$.  Then there is  $C>0$ such that for all $n \in \N$
$$
\left|\frac{Df^n(x)}{Df^n(y)}\right| \le e^{C|f^n(x)-f^n(y)|}.
$$
\end{lemma}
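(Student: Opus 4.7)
The plan is to use the standard telescoping argument for bounded distortion. Writing $\psi(z) = \log |Df(z)|$, the chain rule gives
\begin{equation*}
\log |Df^n(x)| - \log |Df^n(y)| = \sum_{k=0}^{n-1} \bigl(\psi(f^k(x)) - \psi(f^k(y))\bigr),
\end{equation*}
so it is enough to bound each summand by something geometric in $n-k$ and collect the geometric series.

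The first ingredient I would need is that $\psi$ is globally Lipschitz on $\R$. Since $f$ is the lift of a $C^{2+\epsilon}$ expanding map satisfying $f(x+1) = f(x) + d$, its derivative $Df$ is $C^{1+\epsilon}$ and $1$-periodic, hence bounded with bounded derivative; combined with $|Df| \ge \lambda > 1$, the function $\psi = \log|Df|$ is $C^{1+\epsilon}$ and $1$-periodic on $\R$, so it is Lipschitz with some constant $L$.

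The second ingredient is the backward contraction
\begin{equation*}
|f^k(x) - f^k(y)| \le \lambda^{-(n-k)}\, |f^n(x) - f^n(y)|, \qquad 0 \le k \le n.
\end{equation*}
Since $Df$ is continuous, non-vanishing and $1$-periodic on $\R$ it has constant sign, so the lift $f$ is monotone on $\R$; applying the mean value theorem to the (one-dimensional) segment between $f^k(x)$ and $f^k(y)$ gives $|f^{k+1}(x) - f^{k+1}(y)| \ge \lambda\, |f^k(x) - f^k(y)|$, and iteration yields the claim.

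Putting the two ingredients together,
\begin{equation*}
\bigl|\log|Df^n(x)| - \log|Df^n(y)|\bigr| \le L \sum_{k=0}^{n-1} \lambda^{-(n-k)}\, |f^n(x) - f^n(y)| \le \frac{L}{\lambda - 1}\, |f^n(x) - f^n(y)|,
\end{equation*}
so exponentiating gives the estimate with $C = L/(\lambda - 1)$. I do not anticipate any real obstacle; the only point worth flagging is that the backward contraction must hold globally on the lift $\R$ (not just within a single inverse branch), and this is precisely what the monotonicity of the lift together with $|Df| \ge \lambda$ provides through the mean value theorem.
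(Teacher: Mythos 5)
Your proof is correct. The paper does not actually supply an argument for this lemma (it is stated as ``a quite familiar bounded distortion estimate''), and your telescoping argument --- Lipschitz control of $\log|Df|$ from the $C^{2+\epsilon}$, periodic, uniformly expanding hypotheses, plus backward contraction along the orbit via the mean value theorem on the lift --- is exactly the standard proof being alluded to, with the correct handling of the only delicate point (that the contraction estimate holds globally on $\R$ for the lift).
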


The following  lemma is an easy consequence of Lemma \ref{lema_distorcao}.

\begin{lemma}\label{size2}
There exists $C >1$ such that for every $x \in \mathbb{R}$ and $h$ satisfying $0< |h|<  1$  we have 
\begin{equation} \label{size}
\frac{1}{C} \leq |f^{N(x,h)}[x,x+h]|\le C.
\end{equation} 
\end{lemma}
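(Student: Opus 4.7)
The plan is to combine the mean value theorem with Lemma \ref{lema_distorcao}, breaking the obvious circularity via a stopping-time bootstrap. Write $N = N(x,h)$ and $L_j := |f^{j}[x, x+h]|$. By MVT, $L_N = |Df^N(\xi)| \cdot |h|$ for some $\xi \in [x, x+h]$. Definition \ref{deff} together with $|Df| \le \|Df\|_\infty$ gives $|Df^N(x)| \cdot |h| \in [1/\|Df\|_\infty, 1)$, so it suffices to prove that $|Df^N(\xi)|/|Df^N(x)|$ is bounded above and below by uniform constants independent of $x$ and $h$.

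A direct application of Lemma \ref{lema_distorcao} yields $|Df^N(\xi)/Df^N(x)| \le e^{CL_N}$, with the exponent being exactly what we wish to control. To break the loop I would introduce the first-passage time $k := \min\{j \ge 0 : L_j > 1\}$, which is finite because $f$ is expanding and $|h| < 1$. For every $j \le k-1$ the image $f^j[x,x+h]$ has length at most $1$, so Lemma \ref{lema_distorcao} gives the uniform estimate $|Df^{j}(\eta)| \asymp |Df^{j}(x)|$ for all $\eta \in [x, x+h]$, and therefore $L_{k-1} \asymp |Df^{k-1}(x)| \cdot |h|$ by MVT. The trivial bound $L_k \le \|Df\|_\infty L_{k-1}$ combined with $L_k > 1$ forces $L_{k-1} \in [1/\|Df\|_\infty, 1]$, so $|Df^{k-1}(x)| \cdot |h| \asymp 1$.

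I then have $|Df^{k-1}(x)| \cdot |h| \asymp 1 \asymp |Df^N(x)| \cdot |h|$, while the successive ratios $|Df^{j+1}(x)|/|Df^j(x)|$ lie in the compact interval $[\lambda, \|Df\|_\infty]$. This forces $|N - (k-1)|$ to be bounded by a constant depending only on $f$. Since $L_j$ is non-decreasing (by expansion) and $L_{j+1}/L_{j} \in [\lambda, \|Df\|_\infty]$, the bound $L_{k-1} \asymp 1$ transfers to a uniform two-sided bound on $L_N$, which is exactly (\ref{size}).

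The main obstacle is the circular dependence in Lemma \ref{lema_distorcao}: bounded distortion needs a bounded image length to give a useful estimate, yet that bounded length is precisely what we want to establish. The stopping time $k$ supplies an a priori bound on the image length via the trivial inequality $L_k \le \|Df\|_\infty L_{k-1}$, allowing the bootstrap to close. Once past this point everything is routine.
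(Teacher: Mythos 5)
Your argument is correct and is essentially the route the paper intends: the paper offers no written proof of Lemma \ref{size2} beyond declaring it an easy consequence of the bounded distortion estimate (Lemma \ref{lema_distorcao}), and your stopping-time bootstrap is the standard way to make that deduction rigorous, correctly identifying and resolving the apparent circularity (distortion control needs a bounded image, which is what is being proved). All the steps check out, so there is nothing to fix.
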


\begin{lemma}\label{bb666} There exists $C > 0$ such that for every $h$ satisfying  $0< |h| < 1$ and  for every $j\leq n\leq N(x,h)$ 
$$|f^n(a) -f^n(b)|\leq \frac{C}{\lambda^{N(x,h)-n}},$$
and
$$\Big|  \frac{Df^{n-j}(f^ja)}{Df^{n-j}(f^jb)} -1 \Big|\leq \frac{C}{\lambda^{N(x,h)-n}},$$
for every $a,b \in [x,x+h]$ and $\lambda$ is as in (\ref{const}).
\end{lemma}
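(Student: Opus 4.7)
The plan is to derive both estimates from Lemma \ref{size2} together with the bounded distortion of Lemma \ref{lema_distorcao}, pushing the control obtained at the top level $N(x,h)$ back down to level $n$ via the uniform expansion $|Df|>\lambda$.

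First I would establish the estimate $|f^n(a)-f^n(b)|\le C/\lambda^{N(x,h)-n}$. By the definition of $N(x,h)$, bounded distortion forces $f^{N(x,h)}$ (and hence all earlier iterates $f^n$ with $n\le N(x,h)$) to be a diffeomorphism on $[x,x+h]$ onto its image, so $f^n[a,b]$ is an interval for each such $n$. Lemma \ref{size2} gives $|f^{N(x,h)}[x,x+h]|\le C$, and since $[a,b]\subset[x,x+h]$, also $|f^{N(x,h)}(a)-f^{N(x,h)}(b)|\le C$. The chain rule and $|Df|>\lambda$ yield $|Df^{N(x,h)-n}|>\lambda^{N(x,h)-n}$ everywhere, so by the mean value theorem applied on the interval $f^n[a,b]$,
$$|f^{N(x,h)}(a)-f^{N(x,h)}(b)|\ge \lambda^{N(x,h)-n}\,|f^n(a)-f^n(b)|,$$
and the first claim follows.

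For the second estimate, I apply Lemma \ref{lema_distorcao} directly to the iterate $f^{n-j}$ at the points $f^j(a)$ and $f^j(b)$, obtaining
$$\left|\frac{Df^{n-j}(f^j a)}{Df^{n-j}(f^j b)}\right|\le e^{C|f^n(a)-f^n(b)|}.$$
Swapping the roles of $a$ and $b$ gives the matching lower bound, and the elementary inequality $|e^t-1|\le 2|t|$ (valid for $|t|\le 1$, which holds here once $N(x,h)$ is large enough by the first part) produces
$$\left|\frac{Df^{n-j}(f^j a)}{Df^{n-j}(f^j b)}-1\right|\le 2C\,|f^n(a)-f^n(b)|.$$
Inserting the first estimate of the lemma and enlarging $C$ gives the desired bound.

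The main obstacle is merely bookkeeping: verifying that $f^n|_{[x,x+h]}$ is injective for every $n\le N(x,h)$, which is implicit in Lemma \ref{size2} and in the way the definition of $N(x,h)$ interacts with bounded distortion. No genuinely new ingredient beyond Lemmas \ref{size2} and \ref{lema_distorcao} is needed, and both inequalities share the same structural source, namely ``expansion beyond level $n$ contracts distances at level $n$ geometrically in $N(x,h)-n$''.
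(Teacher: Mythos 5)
Your argument is correct and is essentially the paper's own: the paper's proof is the one-line remark that the lemma follows from Lemmas \ref{lema_distorcao} and \ref{size2}, and you supply precisely that deduction (expansion of $f^{N(x,h)-n}$ pushes the bound $|f^{N(x,h)}[x,x+h]|\le C$ down to level $n$, then bounded distortion converts the contracted image into the derivative-ratio estimate). The only slip is your justification of $|e^t-1|\le 2|t|$ via ``$N(x,h)$ large enough'': for $n$ close to $N(x,h)$ the exponent is only bounded by the constant of Lemma \ref{size2}, which need not be $\le 1$, but this is harmless since $e^s-1\le s\,e^{s_0}$ for $0\le s\le s_0$, so one simply enlarges $C$.
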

\begin{proof}It follows from Lemma \ref{lema_distorcao} and Lemma \ref{size2}.
\end{proof}

\begin{proof}[Proof of Proposition \ref{prop_alpha}]
By Lemma \ref{bb666}, for every $y \in [x,x+h]$  we have 
\begin{align*}
\alpha(y) &= -\sum_{n=1}^{N(x,h)}\frac{v(f^{n-1}(y))}{Df^n(y)} -\frac{1}{Df^{N(x,h)}(y)} \sum_{n> N(x,h)}^{\infty}\frac{v(f^{n-1}(y))}{Df^{n-N(x,h)}(f^{N(x,h)}(y))}   \\
&= -\sum_{n=1}^{N(x,h)}\frac{v(f^{n-1}(y))}{Df^n(y)} + O(h),
\end{align*} 
since by Definition \ref{deff}, Lemmas \ref{lema_distorcao} and \ref{size2} 
\begin{align*} &  \big| \frac{1}{Df^{N(x,h)}(y)} \sum_{n> N(x,h)}^{\infty}\frac{v(f^{n-1}(y))}{Df^{n-N(x,h)}(f^{N(x,h)}(y))} \big| \\
&\leq \big| \frac{1}{Df^{N(x,h)}(x)} \big| \big| \frac{Df^{N(x,h)}(x)}{Df^{N(x,h)}(y)}  \sum_{n> N(x,h)}^{\infty}\frac{v(f^{n-1}(y))}{Df^{n-N(x,h)}(f^{N(x,h)}(y))} \big| \\
&\leq  \frac{|h|}{\lambda}  e^{C^2} \sum_{i=1}^\infty  \frac{|v|_\infty}{\lambda^i}.
\end{align*} 
So by the Mean Value Theorem there is $\theta \in [x,x+h]$ such that
\begin{align} \label{pw1}
&\alpha(x+h)-\alpha(x) \nonumber \\
&= -h  \sum_{n=1}^{N(x,h)}\frac{Dv(f^{n-1}(\theta)) Df^{n-1}(\theta) Df^n(\theta) -    D^2f^n(\theta)v(f^{n-1}(\theta))}{[Df^n(\theta)]^2}  + O(h) \nonumber  \\
&= -h  \sum_{n=1}^{N(x,h)} \big[ \frac{Dv(f^{n-1}(\theta))}{Df(f^{n-1}(\theta))} -    \frac{D^2f^n(\theta)v(f^{n-1}(\theta))}{[Df^n(\theta)]^2}\big]   + O(h)
\end{align} 
Note that 
\begin{align} \label{pw2}
  \sum_{n=1}^{N(x,h)} \frac{D^2f^n(\theta)v(f^{n-1}(\theta))}{[Df^n(\theta)]^2}&=  \sum_{n=1}^{N(x,h)}  \sum_{j=0}^{n-1} \frac{D^2 f(f^j(\theta))v(f^{n-1}(\theta))}{Df(f^j(\theta)) Df^{n-j}(f^{j}(\theta))}  \nonumber\\
  &=  \sum_{j=0}^{N(x,h)-1} \frac{D^2 f(f^j(\theta))}{Df(f^j(\theta))}    \sum_{n=j+1}^{N(x,h)} \frac{v(f^{n-j-1}(f^j(\theta)))}{Df^{n-j}(f^{j}(\theta))} \nonumber\\
  &=  \sum_{j=0}^{N(x,h)-1} \frac{D^2 f(f^j(\theta))}{Df(f^j(\theta))}    \sum_{k=1}^{N(x,h)-j} \frac{v(f^{k-1}(f^j(\theta)))}{Df^{k}(f^{j}(\theta))}\nonumber \\
   &=  \sum_{j=0}^{N(x,h)-1} \frac{D^2 f(f^j(\theta))}{Df(f^j(\theta))}  [- \alpha(f^j(\theta))+\frac{\alpha(f^{N(x,h)}(\theta))}{Df^{N(x,h)-j}(f^j(\theta))}].
\end{align} 
and
\begin{align} \label{pw3}
\sum_{j=0}^{N(x,h)-1} \big| \frac{D^2 f(f^j(\theta))}{Df(f^j(\theta))}  \frac{\alpha(f^{N(x,h)}(\theta))}{Df^{N(x,h)-j}(f^j(\theta))}\big| &\leq \sum_{j=0}^{N(x,h)-1} \frac{C}{\lambda^{N(x,h)-j}} \leq \frac{C\lambda}{\lambda-1}.
\end{align} 
By  (\ref{pw1}), (\ref{pw2}) and (\ref{pw3}) we conclude that 
\begin{align} \label{pw11}
\alpha(x+h)-\alpha(x) \nonumber =  h  \sum_{j=0}^{N(x,h)-1}\phi(f^j(\theta))   + O(h) \nonumber  
\end{align} 
By Baladi and Smania \cite{bs3} we know that $\alpha$, and consequently $\phi$  is a $\gamma$-H\"older function for every $\gamma \in (0,1)$, so Lemma \ref{bb666} easily  implies that 
$$\sum_{j=0}^{N(x,h)-1} ( \phi(f^j(\theta))  -\phi(f^j(x))) = O(1).$$
This concludes the proof. 
\end{proof}

\begin{proof}[Proof of Proposition \ref{prop_zygmund}]
By Proposition \ref{prop_alpha},
$$
\alpha(x+h)-\alpha(x)=-\sum_{n=1}^{N(x,h)} \frac{v'(f^{n-1}(x))}{Df(f^{n-1}(x))} + \frac{\alpha(f^{n-1}(x))D^2f(f^{n-1}(x))}{Df(f^{n-1}(x))} \ h + O(h).
$$
And
$$
\alpha(x-h)-\alpha(x)=\sum_{n=1}^{N(x,h)} \frac{v'(f^{n-1}(x))}{Df(f^{n-1}(x))} + \frac{\alpha(f^{n-1}(x))D^2f(f^{n-1}(x))}{Df(f^{n-1}(x))} \ h + O(h).
$$
Therefore,
$$
|\alpha(x+h)+\alpha(x-h)-2\alpha(x)|\le K|h|,
$$
which completes the proof.
\end{proof}

\begin{lemma}\label{lema_pressao}
Let 
$$
\phi(x)=- \left(\frac{v'(x)+\alpha(x)D^2f(x)}{Df(x)}\right).
$$
Then
\begin{equation} \label{zeroint}
\int \phi \ d\mu =0,
\end{equation} 
where $\mu$ is the unique absolutely continuous invariant probability of $f$.
\end{lemma}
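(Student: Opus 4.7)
The plan is to exploit the formal identity $\phi = \alpha' - \alpha'\circ f$, obtained by differentiating the twisted cohomological equation (\ref{tce}): this gives $v'= \alpha'(f)\,Df - D^2f\cdot\alpha - Df\cdot\alpha'$, and rearranging yields $\phi = \alpha' - \alpha'\circ f$. If $\alpha$ were $C^1$, the conclusion $\int\phi\,d\mu = 0$ would follow at once from the $f$-invariance of $\mu$. Since $\alpha$ may be nowhere differentiable, I must implement this coboundary argument weakly, by integration by parts against the invariant density.

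Write $d\mu = \rho\,dm$. Under hypothesis {\bf (H)}, the classical regularity theory for absolutely continuous invariant measures of $C^{2+\epsilon}$ expanding maps gives $\rho\in C^{1+\epsilon}$, so $\rho/Df$ is $C^{1+\epsilon}$ and one may integrate by parts in the $v'$-term:
$$\int\phi\,d\mu = -\int\frac{v'\rho}{Df}\,dm - \int \alpha\,\frac{D^2f}{Df}\,\rho\,dm = \int v\left(\frac{\rho}{Df}\right)'dm - \int \alpha\,\frac{D^2f}{Df}\,\rho\,dm.$$
Next I would substitute $v = \alpha\circ f - Df\cdot\alpha$ into the first integral and apply the duality $\int (g\circ f)\psi\,dm = \int g\,\mathcal{L}\psi\,dm$ for the Perron--Frobenius operator $\mathcal{L}\psi(y) = \sum_{f(x)=y}\psi(x)/Df(x)$, obtaining
$$\int v\left(\frac{\rho}{Df}\right)'dm = \int \alpha\,\mathcal{L}\!\left(\left(\frac{\rho}{Df}\right)'\right)dm - \int \alpha\,Df\left(\frac{\rho}{Df}\right)'dm.$$

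The decisive step is the identity $\mathcal{L}((\rho/Df)') = \rho'$. To derive it, write $\mathcal{L}\rho = \rho$ explicitly as $\sum_i \rho(g_i(y))/Df(g_i(y)) = \rho(y)$ over the inverse branches $g_i$ of $f$, differentiate both sides in $y$, and use $g_i'(y) = 1/Df(g_i(y))$. Combined with the algebraic expansion $Df\cdot(\rho/Df)' = \rho' - \rho\,D^2f/Df$, the $\int \alpha\rho'\,dm$ contribution cancels against itself and the $\int \alpha\,\rho\,D^2f/Df\,dm$ contribution cancels against the second term in the initial splitting, leaving $\int\phi\,d\mu = 0$. The main technical obstacle is justifying the integration by parts; this is where the $C^{2+\epsilon}$ hypothesis on $f$ enters, via the $C^{1+\epsilon}$ regularity of the invariant density $\rho$.
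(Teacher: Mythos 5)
Your argument is correct, and it takes a genuinely different --- and considerably more elementary --- route than the paper's. The computation checks out: integration by parts over the circle (no boundary terms, since everything is $1$-periodic) gives $-\int v'\rho/Df\,dm=\int v\,(\rho/Df)'\,dm$; substituting $v=\alpha\circ f-Df\cdot\alpha$ and using the duality $\int (g\circ f)\psi\,dm=\int g\,\mathcal{L}\psi\,dm$ together with the identity $\mathcal{L}\bigl((\rho/Df)'\bigr)=\rho'$ (obtained by differentiating $\mathcal{L}\rho=\rho$ branchwise, exactly as you indicate) and the expansion $Df\cdot(\rho/Df)'=\rho'-\rho D^2f/Df$, the terms cancel in pairs and $\int\phi\,d\mu=0$ follows. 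The paper instead proves the lemma by a much heavier detour: it first reduces by density to $C^\infty$ data, introduces the family $f_t=f+tv$ and the conjugacies $h_t$ with $f_t\circ h_t=h_t\circ f$, proves that $t\mapsto h_t$ is differentiable at $t=0$ in a H\"older space with derivative $\alpha$ (which requires control of the second derivative $\partial_t^2h_t$ and Reimann's regularity theorem for the conjugacy), and then differentiates the identity $P(f,-\log|Df_t\circ h_t|)=0$ for the topological pressure, identifying $\int\phi\,d\mu$ with $\partial_tP(f,\psi_t)|_{t=0}=0$. What the paper's approach buys is a conceptual explanation --- $\phi$ is the derivative of a family of normalized potentials, so its mean vanishes by linear response/thermodynamic formalism --- and it reuses machinery from Baladi--Smania that appears elsewhere in their work. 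What your approach buys is brevity and self-containedness: it is a half-page computation whose only external input is the classical fact that the invariant density $\rho$ of a $C^{2+\epsilon}$ expanding map lies in $C^{1+\epsilon}$ (the paper only records that $\rho$ is H\"older, so you should cite this regularity explicitly; it follows from the spectral gap of the transfer operator on $C^{1+\epsilon}$). Two cosmetic points: the transfer operator should carry $|Df|$ rather than $Df$, which is harmless since $Df$ has constant sign; and you should say a word justifying termwise differentiation of $\mathcal{L}\rho=\rho$, which is immediate because the sum over inverse branches is finite and each term is $C^{1+\epsilon}$.
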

\begin{proof} Let $\mu_f$ be the unique invariant probability of $f$ that is absolutely continuous with respect to the Lebesgue measure. Note that the function 
$$(v,f) \mapsto \int \left(\frac{v'(x)+\alpha(x)D^2f(x)}{Df(x)}\right) \ d\mu_f$$
is continuous considering the $C^1\times C^2$ topology on its domain. This easily follows from the definition of $\alpha$ and the fact that 
$$f\mapsto \mu_f$$
is continuous considering the strong  topology on the dual space  $(C^0)^\star(\mathbb{S}^1)$ (we believe this is a folklore result in the setting of expanding maps on the circle. See  Keller and Liverani \cite{kl} for references and  stronger results).  Since $C^\infty(\mathbb{S}^1)$ is dense in $C^k(\mathbb{S}^1)$, for every finite $k$, it is enough to show (\ref{zeroint}) when $v\in C^{\infty}$ and $f$ is a $C^\infty$ expanding map on the circle.  Since $f\colon \mathbb{R} \rightarrow \mathbb{R}$ is expanding and satisfies $f(x+1)=f(x)+d$ and $v\colon \mathbb{R} \rightarrow \mathbb{R}$ is $1$-periodic we have $f_t = f+ tv$ is a family of $C^{\infty}$ expanding maps on $\mathbb{R}$  that induces expanding maps on the circle, provided that $t$ is small enough.  We will use the same notation $f_t$ for these maps on the circle. Note that $\partial_tf_t(x)|_{t=0}=v(x).$ Since expanding maps of the circle are structurally stable,   there is a family of conjugacies $h_t$ satisfying  $f_t \circ h_t = h_t \circ f$, with $h_0(x)=x$. We have  $\partial_t h_t |_{t=0}=\alpha(x)$ (see the proof of Theorem 1 in Baladi and Smania  \cite{bs2}). Indeed one can show that 
$$\partial_t h_t(x)=\alpha_t(h_t(x)),$$
where 
$$\alpha_t(x)= -\sum_{n=1}^{\infty}\frac{v(f^{n-1}_t(x))}{Df^n_t(x)}.$$
By Proposition \ref{prop_zygmund} there is $C$ such that for every $t$  small enough
\begin{equation}\label{zyg} |\alpha_t(x+h)+\alpha_t(x-h)-2\alpha_t(x)|\leq C|h|.\end{equation} 
We can conclude that there exists $C > 0$ such that
$$|\alpha_t(x)-\alpha_t(y)|\leq C |x-y|^\epsilon.$$
By (\ref{zyg}) and  Reimann \cite[proof of Proposition 4]{ode} for every $\delta > 0$ small there exists $C > 0$ such that 
\begin{equation}\label{regh} |h_t(x)-h_t(y)|\leq C |x-y|^{1-\delta},\end{equation}
provided that $t$ is small enough. By   Baladi and Smania  \cite[Eqs. (14) and (15)]{bs1} for each $x$ the map $t\mapsto h_t(x)$ is twice differentiable and $\partial^2_t h_t(x)$ satisfies
$$\partial^2_t h_t(x) = \beta_t(h_t(x)),$$
where
$$\beta_t(x) = -\sum_{n=1}^{\infty}\frac{w_t(f^{n-1}_t(x))}{Df^n_t(x)},$$
with
$$w_t(x)= \partial^2_t f_t(x)+ 2 \partial^2_{x,t}f_t(x)\alpha_t(x)+ \partial^2_{xx} f_t(x) \alpha_t^2(x).  $$
Note that $w_t \in C^\epsilon$ and  their H\"older norm is uniformly bounded provided that $t$ is small enough.  By Baladi and Smania \cite[Proposition 2.3]{bs3} (indeed here we have a far simpler situation, once $f$ is smooth everywhere) there exists $C > 0$ such that
\begin{equation}
\label{regb} |\beta_t(x)-\beta_t(y)|\leq C |x-y|^\epsilon.
\end{equation}
We claim that the curve  $t \mapsto h_t$ is a differentiable curve at $t=0$  in the Banach space $C^{\epsilon'}$of  $\epsilon'$-H\"older functions, for every  $\epsilon' \in (0,\epsilon)$,  and  its derivative at $t=0$ is the function $\alpha$. Indeed note that
\begin{eqnarray} \label{intr} h_t(x)-x - t\alpha(x)&=& \int_0^t \partial_t h_t(x) |_{t=a}  -\alpha(x)\ da \nonumber \\
&=& \int_0^t  \partial_t h_t(x) |_{t=a} - \partial_t h_t(x) |_{t=0}  \ da \nonumber \\
&=& \int_0^t  \int_0^a \partial_t^2 h_t(x) |_{t=b}  \ db \ da \nonumber \\
&=& \int_0^t  \int_0^a \beta_b(h_b(x))  \ db \ da. 
\end{eqnarray} 
Thus, if $r_t(x)= h_t(x)- x - t\alpha(x)$ we have by (\ref{regh}) , (\ref{regb}) and (\ref{intr}) 
\begin{eqnarray} \frac{|r_t(x)-r_t(y)|}{|x-y|^{\epsilon'}}&\leq& \int_0^t  \int_0^a \frac{ |\beta_b(h_b(x)) -\beta_b(h_b(y)|}{|x-y|^{\epsilon'}} \ db \ da \nonumber  \\
&\leq&   \frac{C^\epsilon}{2}  t^2.
 \end{eqnarray} 
This proves the claim. Consider a family of potentials
$$
\psi_t(x) = -\log |Df_t(h_t(x))|.
$$ 
Using that $t \mapsto h_t$ is differentiable at $t=0$ one  can prove, with  an argument similar to that used to prove that $h_t$ is differentiable at $t=0$, that the map 
$$t\mapsto \psi_t$$
is differentiable considering the Banach space of $\epsilon'$-H\"older functions on its image, with $\epsilon' < \epsilon$.  Baladi and Smania \cite{bs3} did something similar considering the space of $p$-bounded variations on the image, with $p$ large. 
Note  that $P(f,\psi_t)= 0$ for every $t$, where $P(f,\psi)$ denotes the topological pressure of $\psi$ with respect to $f$. Therefore,
$$
\partial_tP(f,\psi_t)|_{t=0} = 0.
$$
The topological pressure with respect to $f$ is a differentiable function on the Banach space of $\epsilon'$-H\"older functions and  by classical arguments of thermodynamic formalism (see Parry and Pollicott \cite{pressao}), we have
$$
\partial_tP(f,\psi_t)|_{t=0} =\int \partial_t\psi_t|_{t=0} d\mu = -\int  \frac{v'(x)+D^2f(x)\alpha(x)}{Df(x)}\ d\mu = \int \phi \ d\mu,
$$
where $\mu$ is the equilibrium state of $f$ with respect to $-\log |Df|$, that is, the unique absolutely continuous invariant probability of $f$.
\end{proof}

Now we can prove Theorem \ref{teo_se_e_so_se}.
\begin{proof}[Proof of Theorem \ref{teo_se_e_so_se}]
 If $\sigma^2=0$ then by  Proposition 4.12 in Parry and Pollicott \cite{pressao} (see also Proposition 6.1 from Broise \cite{broise}), there is a $C^{\epsilon}$-function $u: \mathbb{S}^1 \to \R$ such that
$$
\phi = u\circ f -u.
$$
Then there is  $C>0$ such that
$$
\sup_{n,x}\left|\sum_{i=0}^{n} \phi(f^i(x)) \right| \le C.
$$
And by  Proposition \ref{prop_alpha}
$$
|\alpha(x+h) - \alpha(x)| \le \left| \sum_{i=0}^{N(x,h)} \phi(f^i(x)) \ h\right| + O(h) \le C|h|.
$$
Therefore $\alpha$ is a lipschitzian function and we can differentiate it at almost every point. Then, differentiating (\ref{tce}), we obtain:
$$
v'(x) = \alpha'(f(x))Df(x) - D^2f(x)\alpha(x) - Df(x)\alpha'(x).
$$
Therefore,
$$
\alpha'(f(x))-\alpha'(x) = \frac{v'(x) + D^2f(x)\alpha(x)}{Df(x)}=\phi(x).
$$
Since $\phi(x) = u \circ f -u$, it follows that
$$
(\alpha' - u) \circ f(x) = (\alpha' - u) (x).
$$
Since $f$ is ergodic, we can conclude that there is a constant $k$ such that $\alpha' = u + k$. Therefore $\alpha$ is of class $C^{1+\epsilon}$. Reciprocally, if $\alpha$ is of class $C^{1+\epsilon}$, we can differentiate (\ref{tce}). Thus,
$$
v'(x) = \alpha'(f(x))Df(x) - D^2f(x)\alpha(x) - Df(x)\alpha'(x).
$$
Therefore,
$$
\alpha'(f(x))-\alpha'(x) = \frac{v'(x) + D^2f(x)\alpha(x)}{Df(x)}.
$$
Let $p$ be a periodic point of $f$, that is, $f^n(p)=p$. Then
$$
\sum_{j=0}^{n-1}\phi(f^j(p)) = \alpha'(f^n(p)) - \alpha'(p) =\alpha'(p) - \alpha'(p) =0.
$$
Therefore, by Livsic Theorem ( Livsic \cite{livsic}. See also  Parry and Pollicott \cite{pressao}), there is a $\epsilon$-H\"older function $u$ such that
$$
\phi=u\circ f - u.
$$
Hence, as we can see in  Proposition 4.12 in Parry and Pollicott \cite{pressao} (see also Proposition 6.1 from Broise \cite{broise}), $\sigma^2 = 0$.
\end{proof}
\begin{proof}[Proof of Corollary  \ref{cor_alpha}] By  Livsic \cite{livsic}, there exists $u \in C^\epsilon$ such that $\phi= u\circ f - u$ if and only if for every periodic point $x$ we have
$$\sum_{i=0}^{p-1} \phi(f^i(x))=0,$$
where $p$ is the period of $x$. But by   Proposition 4.12 in Parry and Pollicott \cite{pressao} (see also Proposition 6.1 from Broise \cite{broise}) such $u$ exists if and only if $\sigma^2=0$. Now we can apply Theorem \ref{teo_se_e_so_se} to conclude the proof. 
\end{proof}

\section{Proof of Central Limit Theorem for the modulus of continuity of $\alpha$}\label{sec_prova}

We are going to need the following 

\begin{lemma}\label{lema_exp_lyapunov}
For $\mu$-a.e. $x \in \mathbb{S}^1$,
$$\lim_{h\to 0} \frac{N(x,h)}{-\log |h|} = \frac{1}{L},$$
where $L := \int \log |Df| d\mu$
is the Lyapunov exponent of $f$. \index{Lyapunov exponent}
\end{lemma}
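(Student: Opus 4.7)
The plan is to translate the defining inequality for $N(x,h)$ into an inequality on $-\log|h|$ and then apply the Birkhoff Ergodic Theorem to the observable $\log |Df|$. Taking logarithms in the inequality of Definition~\ref{deff} one immediately obtains
$$\log |Df^{N(x,h)}(x)| \ < \ -\log |h| \ \le \ \log |Df^{N(x,h)+1}(x)|.$$
The next step is to note that, since $f$ is expanding with $|Df|>\lambda>1$, one has $|Df^n(x)|\ge \lambda^n$, and hence $N(x,h)\to\infty$ as $h\to 0$ for every $x\in\mathbb{S}^1$.

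The key input is ergodicity of the measure $\mu$. Since $\mu$ is the unique absolutely continuous invariant probability of the $C^{2+\epsilon}$ expanding map $f$, it is ergodic (a classical fact, see for instance \cite{viana}), and $\log|Df|$ is bounded and in particular in $L^1(\mu)$. Thus the Birkhoff Ergodic Theorem, applied to the observable $\log|Df|$, yields for $\mu$-a.e.\ $x$
$$\lim_{n\to\infty}\frac{1}{n}\log|Df^n(x)|
=\lim_{n\to\infty}\frac{1}{n}\sum_{k=0}^{n-1}\log|Df(f^k(x))|
=\int \log|Df|\,d\mu = L.$$

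Combining the two ingredients, for $\mu$-a.e.\ $x$ one divides the displayed inequality by $N(x,h)$ to get
$$\frac{\log|Df^{N(x,h)}(x)|}{N(x,h)} \ < \ \frac{-\log|h|}{N(x,h)} \ \le \ \frac{N(x,h)+1}{N(x,h)}\cdot\frac{\log|Df^{N(x,h)+1}(x)|}{N(x,h)+1}.$$
As $h\to 0$, $N(x,h)\to\infty$, so both the outer terms converge to $L$, giving $-\log|h|/N(x,h)\to L$ and therefore $N(x,h)/(-\log|h|)\to 1/L$. There is no real obstacle in this argument: it is essentially the defining inequality plus one application of the pointwise ergodic theorem. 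The only points to cite carefully are the ergodicity of $\mu$ and the integrability (indeed boundedness) of $\log|Df|$, both of which are standard for $C^{2+\epsilon}$ expanding circle maps.
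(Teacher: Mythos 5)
Your argument is correct and is essentially identical to the paper's proof: both take logarithms in the defining inequality for $N(x,h)$, divide by $N(x,h)$, and apply the Birkhoff Ergodic Theorem to $\log|Df|$, using that $N(x,h)\to\infty$ as $h\to 0$. The only (welcome) difference is that you explicitly record the ergodicity of $\mu$ and the integrability of $\log|Df|$, which the paper leaves implicit.
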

\begin{proof}
By Ergodic Birkhoff's Theorem, for $\mu$-a.e. $x \in \mathbb{S}^1$
\begin{align*}
\lim_{n\to \infty} \frac{1}{n}\log|Df^{n}(x)|&=\lim_{n\to \infty} \frac{1}{n}\log \prod_{j=0}^{n-1}|Df(f^j(x))|\\
&= \lim_{n\to \infty} \frac{1}{n}\sum_{j=0}^{n-1}\log |Df(f^j(x))| = \int \log |Df(x)| \ d\mu= L.
\end{align*}
By  (\ref{defNxh}) we have
$$
|Df^{N(x,h)}(x)| \le \frac{1}{|h|} \le |Df^{N(x,h)+1}(x)|.
$$
Therefore,
$$
\frac{1}{N(x,h)}\log |Df^{N(x,h)}(x)| \le \frac{1}{N(x,h)}\log \frac{1}{|h|} \le \frac{1}{N(x,h)}\log |Df^{N(x,h)+1}(x)|.
$$
Hence, for $\mu$-a.e. $x \in \mathbb{S}^1$,
$$
\frac{1}{N(x,h)}\log \frac{1}{|h|} \stackrel{N(x,h) \to \infty}\longrightarrow \int \log |Df(x)| d\mu = L.
$$
\end{proof}

\begin{proof}[Proof of Theorem \ref{teo_modulo_alpha}]
It is enough to prove that for every sequence
$
h_n \underset{n\to \infty}{\longrightarrow}0,
$
we have
$$
\lim_{n\to \infty}\mu \left\{x:\frac{\alpha(x+h_n)-\alpha(x)}{\sigma \ell h_n\sqrt{-\log |h_n|}} \le y\right\} = \frac{1}{\sqrt{2\pi}} \int_{-\infty}^{y} e^{-\frac{t^2}{2}} dt.
$$
By Proposition \ref{prop_alpha},
\begin{align*}
\alpha(x+h_n)-\alpha(x) = h_n\sum_{i=1}^{N(x,h_n)} \phi(f^{i-1}(x)) + O(h_n).
\end{align*}
Since 
$$
\frac{1}{\sigma \ell h_n\sqrt{-\log |h_n|}}\ O(h_n)\stackrel{n\to \infty} \longrightarrow 0,
$$
we have
\begin{align*}
\frac{\alpha(x+h_n)-\alpha(x)}{\sigma \ell h_n\sqrt{-\log|h_n|}} = \frac{1}{\sigma \ell \sqrt{-\log|h_n|}}\sum_{i=1}^{N(x,h_n)} \phi\left( f^{i-1}(x)\right) + r(x,h_n),
\end{align*}
where
$$\lim_n \sup_{x\in [0,1]}|r(x,h_n)|=0.$$
From now on the proof is similar to an argument in Leplaideur  and Saussol \cite{rb}. We will include it here for the sake of completeness.  Let us define
$$
X_N(\theta,x) = \frac{1}{\sigma\sqrt N}\sum_{k=0}^{\lfloor N\theta\rfloor -1} \phi(f^k(x)) + \frac{(N\theta-\lfloor N\theta  \rfloor)}{\sigma \sqrt{N} } \ \phi(f^{\lfloor N \theta  \rfloor}(x)),
$$
and $Y_n$ by:
$$
Y_n(\theta,x)= \frac{1}{\sigma \sqrt{\nu_n(x)} }\sum_{k=0}^{\lfloor \nu_n \theta  \rfloor-1}  \phi(f^k(x))+ \frac{(\nu_n(x)\theta-\lfloor \nu_n(x) \theta \rfloor)}{\sigma \sqrt{\nu_n(x)} } \ \phi(f^{\lfloor \nu_n(x) \theta \rfloor}(x))
$$ 
where $\nu_{n}(x) = N(x,h_n)$.
By Lemma \ref{lema_exp_lyapunov}, 
$$
\frac{N(x,h)}{-\log|h|} \stackrel{h\to 0}\longrightarrow \frac{1}{L},
$$
then
$$
\frac{N(x,h_n)}{-\log|h_n|} \stackrel{P}\longrightarrow \frac{1}{L}.
$$
By Lemma \ref{lema_pressao} and  Keller \cite{kellerbv} and Hofbauer and Keller \cite{hofbauer} we have that $X_N(\theta,x)$ converges in distribution to the Wiener Process. We denote this convergence by
$$X_N(\theta,x) \stackrel{D}\longrightarrow_N W.$$
Then, \cite[page 152]{bil} we conclude that
$$Y_n(\theta,x) \stackrel{D}\longrightarrow_n W,$$
where $W$ is the Wiener process. Hence, taking $\theta=1$ we conclude that 
$$Y_n(1,x) \stackrel{D} \longrightarrow_n \mathcal{N}(0,1),
$$ where $\mathcal{N}(0,1)$ denotes the Normal distribution with average zero and  variance one. 
Observe that 
$$
Y(1,x) = \frac{1}{\sigma \sqrt{\nu_n(x)} }\sum_{k=0}^{N(x,h)-1}  \phi(f^k(x)).
$$
Therefore, considering 
$$
Z_n(x) = \frac{\sqrt{N(x,h_n)}}{\sqrt{-\log |h_n|}},
$$
by Slutsky's theorem (see \cite{Gut}), since 
$$
Z_n \stackrel{P} \longrightarrow_n \ell,
$$
we can conclude that
$$
\tilde{Y}_n(x)=Y_n(1,x).Z_n(x)=\frac{1}{\sigma \sqrt{-\log |h_n|} }\sum_{k=0}^{N(x,h)-1}  \phi(f^k(x))\stackrel{D} \longrightarrow_n \ell \ \mathcal{N}(0,1).
$$
Hence, taking $R_n(x) = r(x,h_n)$ and using Slutsky's theorem one more time, we have
$$
\frac{1}{\ell}\tilde{Y}_n(x) + R_n(x) = \frac{\alpha(x+h_n)-\alpha(x)}{\sigma \ell h_n\sqrt{-\log|h_n|}}\stackrel{D} \longrightarrow_n  \mathcal{N}(0,1).
$$
\end{proof}

\begin{proof}[Proof of Corollary \ref{cor_lipschitz}]
The proof is identical to the proof of a  similar statement in \cite{lsx}.
\end{proof}

\begin{proof}[Proof of Theorem \ref{teo_lil}] By   Keller \cite{kellerbv} and Hofbauer and Keller \cite{hofbauer} (see also Przytycki,  Urba{\'n}ski and  Zdunik \cite{puz1} for the analytic setting ) we have that 
$$\limsup_{N\rightarrow \infty} \frac{\sum_{i=0}^{N-1} \phi(f^{i}(x))}{\sqrt{2N \log \log N}} = \sigma(\phi).$$
and
$$\liminf_{N\rightarrow \infty} \frac{\sum_{i=0}^{N-1} \phi(f^{i}(x))}{\sqrt{2N \log \log N}} =- \sigma(\phi).$$
for $\mu$-almost every point $x$. By Proposition \ref{prop_alpha} and Lemma \ref{lema_exp_lyapunov} the result easily follows.
\end{proof}

\section{Dichotomy for the regularity of $\alpha$}\label{sec2.2}

We will prove Theorem \ref{teo_alpha_dif} using methods similar to those in Heurteaux \cite{yanick}.  We need to introduce some notations and definitions. Given a function $w\colon \mathbb{R}\rightarrow \mathbb{R}$ we define the second-order difference of $w$ by
$$
\Delta_h^2 w (x) = w(x+h) + w(x-h) -2w(x).
$$
Denote
$$\omega(w,x,h)= \frac{\Delta_h^2 w (x)}{|h|^{1+\epsilon}}.$$

\begin{lemma}\label{lema_tecnico_delta} We have
\begin{equation}\label{ocr} \frac{|\Delta_h^2\alpha(x)|}{|h|^{1+\epsilon}} = |Df(x)|^\epsilon  \frac{|\Delta_{Df(x)h}^2\alpha(f(x))|}{|Df(x) h|^{1+\epsilon}} + O(1).\end{equation}
\end{lemma}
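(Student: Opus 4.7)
The plan is to use the twisted cohomological equation $\alpha(f(x)) = v(x) + Df(x)\alpha(x)$ to convert the second difference of $\alpha$ at $x$ into the second difference of $\alpha$ at $f(x)$, while controlling all auxiliary error terms at order $O(|h|^{1+\epsilon})$.

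First I would apply the twisted cohomological equation at $x+h$, $x-h$, and $x$ and combine the three identities to obtain
$$
\Delta_h^2\bigl(\alpha\circ f\bigr)(x) = \Delta_h^2 v(x) + Df(x+h)\alpha(x+h) + Df(x-h)\alpha(x-h) - 2 Df(x)\alpha(x).
$$
Since $v \in C^{1+\epsilon}$, one has $\Delta_h^2 v(x) = O(|h|^{1+\epsilon})$. For the remaining terms I would expand $Df(x\pm h) = Df(x) \pm D^2f(x) h + O(|h|^{1+\epsilon})$, valid because $f \in C^{2+\epsilon}$, and group the products so as to isolate $Df(x)\Delta_h^2\alpha(x)$. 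Using boundedness of $\alpha$ this leaves a single genuine cross term
$$
D^2f(x) h\bigl[\alpha(x+h) - \alpha(x-h)\bigr],
$$
plus errors of order $|h|^{1+\epsilon}$. Invoking Proposition~\ref{prop_zygmund} and the standard fact that Zygmund functions satisfy $|\alpha(x+h)-\alpha(x-h)| = O(|h|\log(1/|h|))$, this cross term is $O(|h|^2\log(1/|h|)) = o(|h|^{1+\epsilon})$ and can be absorbed.

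Next, I would replace $\alpha(f(x\pm h))$ on the left by $\alpha(f(x)\pm Df(x)h)$. Using the Taylor expansion $f(x\pm h) = f(x) \pm Df(x)h + O(|h|^2)$ together again with the Zygmund modulus of continuity of $\alpha$, the error incurred is $O(|h|^2\log(1/|h|)) = o(|h|^{1+\epsilon})$. Combining both reductions,
$$
\Delta_{Df(x)h}^2 \alpha(f(x)) = Df(x)\,\Delta_h^2 \alpha(x) + O(|h|^{1+\epsilon}).
$$
Taking absolute values, dividing by $|Df(x)||h|^{1+\epsilon}$, and rewriting the prefactor as $|Df(x)|^{-1}|h|^{-(1+\epsilon)} = |Df(x)|^{\epsilon}|Df(x)h|^{-(1+\epsilon)}$ yields exactly \eqref{ocr}; the remainder is $O(1)$ uniformly in $x$ because $|Df(x)|$ is bounded below by $\lambda>1$.

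The main obstacle is keeping the exponent budget tight: one must verify that \emph{every} leftover term is genuinely of order $|h|^{1+\epsilon}$. The subtlest piece is the $D^2f(x)\cdot h\cdot[\alpha(x+h)-\alpha(x-h)]$ cross term, for which mere boundedness of $\alpha$ would only give $O(|h|)$; one really needs the logarithmic modulus of continuity provided by Zygmund regularity to improve this to $o(|h|^{1+\epsilon})$. This is why the lemma is placed after Proposition~\ref{prop_zygmund}, and it is what makes the Zygmund bound an essential input rather than a cosmetic refinement.
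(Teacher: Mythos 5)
Your proof is correct and follows essentially the same route as the paper's: apply the twisted cohomological equation at $x\pm h$ and at $x$, Taylor-expand $f$ and $v$, and absorb every leftover term at order $O(|h|^{1+\epsilon})$ using the regularity of $\alpha$ supplied by Proposition~\ref{prop_zygmund}, arriving at the same identity $\Delta_{Df(x)h}^2\alpha(f(x)) = Df(x)\,\Delta_h^2\alpha(x) + O(|h|^{1+\epsilon})$. The only cosmetic difference is that the paper controls the cross terms via the fact that Zygmund functions are $\beta$-H\"older for every $\beta<1$, whereas you use the $|h|\log(1/|h|)$ modulus of continuity; either consequence of the Zygmund bound suffices.
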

\begin{proof}  In Corollary \ref{prop_zygmund} we saw that $\alpha$ is  in the  Zygmund class $\Lambda_1$. So $\alpha$ is   $\beta$-H\"older for every  $0<\beta<1$ (see \cite{donaire} and references therein).  Consequently
\begin{align*} \alpha(f(x+h))&=\alpha(f(x)+Df(x)h+ O(|h|^{2}) )\\
&= \alpha(f(x)+Df(x)h)+ O(|h|^{1+\epsilon}).
\end{align*}
on the other hand
\begin{align*} \alpha(f(x+h))&=v(x+h)+Df(x+h)\alpha(x+h)\\
&=v(x)+Dv(x)h +Df(x)\alpha(x+h) + D^2f(x)\alpha(x+h)h+ O(|h|^{1+\epsilon}) \\
&=v(x) +[Dv(x) +D^2f(x)\alpha(x)]h  +Df(x)\alpha(x+h) + O(|h|^{1+\epsilon}).
\end{align*}
So
\begin{align*}
\Delta_{Df(x)h}^2\alpha(f(x))= Df(x) \Delta_h^2\alpha(x)  + O(|h|^{1+\epsilon}).
\end{align*}
and (\ref{ocr}) follows.
\end{proof}
Therefore there is $K > 0$ such that 
\begin{equation}\label{modulo_delta}\omega(\alpha,x,h) \geq |Df(x)|^\epsilon \omega(\alpha,f(x),Df(x) h) -K,\end{equation}
for every $x$ and $h \neq 0$. We will denote by
$b= \inf\{|Df(x)|: x\in \R\}$ and $B= \sup\{|Df(x)|: x\in \R\}$.
It is easy to see that
\begin{lemma}\label{lema2} Let $K$ be as in (\ref{modulo_delta}).
Let $x \in \R$, $h > 0$ and $L > 0$  satisfying
$$
 \omega(\alpha,f(x),Df(x) h) \ge \frac{K+L}{b^{\epsilon}-1}.
$$
Then
$$
\omega(\alpha,x,h)  \ge \frac{K+L|Df(x)|^{\epsilon}}{b^{\epsilon}-1},
$$
\end{lemma}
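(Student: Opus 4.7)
The statement is an essentially algebraic consequence of the pointwise estimate (\ref{modulo_delta}) together with the definition $b=\inf_x |Df(x)|$. So the plan is direct and contains no real obstacle, only a short calculation.

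First, I would apply (\ref{modulo_delta}) at the point $x$ with increment $h$:
$$
\omega(\alpha,x,h) \;\geq\; |Df(x)|^{\epsilon}\,\omega(\alpha,f(x),Df(x)h) - K.
$$
Next, I would substitute the hypothesis $\omega(\alpha,f(x),Df(x)h) \geq \frac{K+L}{b^{\epsilon}-1}$ (which is a nonnegative quantity since $b>1$ because $f$ is expanding, and $K,L>0$, so multiplying by the positive factor $|Df(x)|^{\epsilon}$ preserves the inequality). This yields
$$
\omega(\alpha,x,h) \;\geq\; \frac{|Df(x)|^{\epsilon}(K+L)}{b^{\epsilon}-1} - K \;=\; \frac{|Df(x)|^{\epsilon}(K+L) - K(b^{\epsilon}-1)}{b^{\epsilon}-1}.
$$

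Finally, I would compare the resulting numerator with the target $K + L|Df(x)|^{\epsilon}$. The desired inequality reduces, after cancelling the common term $L|Df(x)|^{\epsilon}$ and the $-K$, to
$$
K|Df(x)|^{\epsilon} \;\geq\; K b^{\epsilon},
$$
which is immediate from $|Df(x)| \geq b$ by the definition of $b$. Combining the two displayed bounds gives
$$
\omega(\alpha,x,h) \;\geq\; \frac{K + L|Df(x)|^{\epsilon}}{b^{\epsilon}-1},
$$
as required. The only thing worth flagging is the sign check at the substitution step, ensuring the factor $|Df(x)|^{\epsilon}$ is multiplied against a nonnegative lower bound; this is guaranteed by $b>1$ (expansion) and $K,L>0$. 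No further ingredients are needed.
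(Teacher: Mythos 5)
Your proof is correct: the paper itself gives no proof of this lemma (it is introduced with ``It is easy to see that''), and your computation---apply (\ref{modulo_delta}), insert the hypothesis multiplied by the positive factor $|Df(x)|^{\epsilon}$, and reduce the resulting comparison to $K|Df(x)|^{\epsilon}\ge Kb^{\epsilon}$, which holds since $|Df(x)|\ge b$---is exactly the intended argument. The only minor remark is that your nonnegativity check at the substitution step is not actually needed (multiplying both sides of an inequality by a positive constant preserves it regardless of sign), but it does no harm.
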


\begin{proof}[of Theorem \ref{teo_alpha_dif}] Note that 
$$
\sup \left\{\omega(\alpha,x,h), x\in \R, h>  0 \right\} < \infty.
$$
if and only if  $\alpha$ is of class $C^{1+\epsilon}$  (see \cite[Lemma $5.4$, page $207$]{krantz}  for more details). Suppose that $\alpha$ is {\it not} $C^{1+\epsilon}$. Then there exists $x_0\in [0,1], h_0\in (0,1)$ and $L > 0$ such that 
$$\omega(\alpha,x_0,h_0) > \frac{K+L}{b^{\epsilon}-1}.$$
Given $n \in \mathbb{N}$, let $x_i$ be such that $f(x_{i+1})=x_i$ for every $i < n$. Then by  Lemma \ref{lema2} we have
$$\omega(\alpha,x_n,\frac{h_0}{Df^n(x_n)})\geq \frac{L}{b^{\epsilon}-1}|Df^n(x_n)|^\epsilon,$$
that is
\begin{equation}\label{weak}  |\Delta_{\frac{h_0}{Df^n(x_n)}}^2\alpha(x_n)| \geq \frac{L|h_0|^{\epsilon}}{b^{\epsilon}-1}\frac{h_0}{Df^n(x_n)}.\end{equation}
Fix  $y \in [0,1]$. For each  $h > 0$ let $n=n(h)$ be minimal such that 
$$f^n [y -h, y+h] \supset [x_0-h_0,x_0+h_0].$$
Then by Lemma \ref{lema_distorcao} there is $C$ such that for every $h >0$ we have
$$\frac{1}{C} \leq  \frac{Df^n(x)}{Df^n(y)} \leq C$$
and 
$$\frac{h}{C}   \leq  \frac{h_0}{Df^n(x)} \leq C h$$
for every $x \in [y -h, y+h]$. Choose $x_n \in [y -h, y+h] $ such that $f^n(x_n)=x_0$. If $\alpha$ is differentiable at $y$ we have that 
$$ |\Delta_{\frac{h_0}{Df^n(x_n)}}^2\alpha(x_n)|= o(h)=o(\frac{h_0}{Df^n(x_n)}), $$
This contradicts   (\ref{weak}).
\end{proof} 

\bibliographystyle{abbrv}


\end{document}